\newtheorem{theorem}{Theorem}[section]
\newtheorem{lemma}[theorem]{Lemma}
\newtheorem{proposition}[theorem]{Proposition}
\theoremstyle{definition}
\newtheorem{example}[theorem]{Example}
\newtheorem{definition}[theorem]{Definition}
\newtheorem{remarks}[theorem]{Remarks}
\def\RR{\hbox{\sf I\kern-.14em\hbox{R}}}
\newcommand{\WW}{\omega}
\newcommand{\SG}{\mathfrak{S}}
\newcommand{\Rmax}{\RR_{\max}}
\newcommand{\Rmin}{\RR_{\min}}
\newcommand{\Rnnmax}{\Rmax^{n \times n}}
\newcommand{\fullchi}[1]{\bar{\chi}_{#1}}
\DeclareMathOperator{\maxperm}{\mathrm{maxperm}}
\numberwithin{equation}{section}
\newcommand{\maxzero}{-\infty}
\begin{document}

\baselineskip 5.8mm

\title{Computing the sequence of $k$-cardinality assignments}

\author{Amnon Rosenmann}
\affil{Institute of Discrete Mathematics \\
	Graz University of Technology, Graz, Austria \\
	rosenmann@math.tugraz.at}
\date{}
\maketitle

\begin{abstract}
\baselineskip 7mm
The $k$-cardinality assignment problem asks for finding a maximal (minimal) weight of a matching of cardinality $k$ in a weighted bipartite graph $K_{n,n}$, $k \leq n$.
The algorithm of Gassner and Klinz from 2010 for the parametric assignment problem computes in time $\mathcal{O}(n^3)$ the set of $k$-cardinality assignments for those integers $k \leq n$ which refer to ''essential'' terms of a corresponding maxpolynomial. 
We show here that one can extend this algorithm and compute in a second stage the other ''semi-essential'' terms in time $\mathcal{O}(n^2)$, which results in a time complexity of $\mathcal{O}(n^3)$ for the whole sequence of $k=1, \ldots, n$-cardinality assignments.
The more there are assignments left to be computed at the second stage the faster the two-stage algorithm runs. 
In general, however, there is no benefit for this two-stage algorithm on the existing algorithms, e.g. the simpler network flow algorithm based on the successive shortest path algorithm which also computes all the $k$-cardinality assignments in time $\mathcal{O}(n^3)$.
\end{abstract}
\noindent
\textbf{Keywords:} $k$-cardinality assignment problem; parametric assignment algorithm; max-plus algebra; full characteristic maxpolynomial
\section{Introduction}
The linear assignment problem is a basic problem in combinatorial optimization.
The time complexity for solving the $k$-cardinality assignment problem in a full  bipartite graph $K_{n,n}$ with weights on its edges is, for general $k$, $\mathcal{O}(n^3)$, for example through the Hungarian method.

Gassner and Klinz \cite{GK10} showed that one can compute the parametric assignment problem in time $\mathcal{O}(n^3)$.
It follows that the list of $k$-cardinality assignments that correspond to essential terms of the full characteristic maxpolynomial of the matrix representing the weights on $K_{n,n}$ can be computed in $\mathcal{O}(n^3)$.
We show here that one can, in fact, compute in time $\mathcal{O}(n^3)$ \textit{all} the $k$-cardinality assignments and not just the essential ones. 
The result is based on the full characteristic maxpolynomial having the property of being in full canonical form \cite{RLP19}, for which we give here a simple proof.

We note, however, that this time complexity does not improve on the existing algorithms.  The simpler network flow algorithm (see \cite{AMO93}), based on successive shortest paths, which augments one unit flow at each iteration, runs also in time complexity $\mathcal{O}(n^3)$.

In special cases where there are many assignments left to compute after the completion of the Gassner and Klinz algorithm then the fact that these left assignments can be computed in time $\mathcal{O}(n^2)$ may be of benefit.
\section{The $k$-linear assignment problem}
We are given a complete bipartite graph (biclique) $K_{n,n} = G(U,V;E)$ consisting of two disjoint sets of vertices $U = \{u_1, \ldots, u_n\}$ and $V = \{v_1, \ldots, v_n\}$, and an edge set $E$, adjoining each vertex $u_i \in U$ with each vertex $v_j \in V$ by an edge $e_{ij}$.
In addition, we are given a weight ("cost") function $w : E \to \Rmax = \RR \cup \{-\infty\}$ which assigns each edge $e_{ij}$ a weight $w_{ij}$.
The $k$-cardinality Linear Assignment Problem, or $k$-LAP (see e.g. \cite{DM97}, \cite{DLM01}, \cite{Vol04}, \cite{BJ16}), asks for finding a maximal (optimal) weight of a matching (independent edge set) of cardinality $k$, $k \leq n$, that is, a set $M_k \subseteq E$ of $k$ pairwise non-adjacent edges in $K_{n,n}$ which is of maximal total weight:  
\begin{equation}
\max_{\{i_1,\ldots,{i_k}\},\{j_1,\ldots,{j_k}\} \subseteq [1..n]} \; \Sigma_{l=1}^k w_{i_l j_l},
\end{equation}
where the indices $i_l$ as well as the indices $j_l$ are pairwise distinct.
\begin{remarks}
	\begin{enumerate}
		\item An analogous and similarly solved problem asks for computing the minimal assignment, and then one works over $\Rmin = \RR \cup \{+\infty\}$. 
		\item If the problem refers to sets $U$ and $V$ of different cardinalities $n$ and $m < n$, respectively, then we can always introduce $n-m$ spurious (dummy) vertices and $n(n-m)$ spurious edges of weights $-\infty$ ($+\infty$ in the minimal assignment case) so that $U$ and $V$ become of equal cardinality. 
		\item  We refer to \cite{BDM12} for a comprehensive treatment of linear and related assignment problems.
	\end{enumerate}
\end{remarks}

Representing the weight function as an $n \times n$ matrix $W = (w_{ij})$ over $\Rmax$, the $k$-LAP is about finding $k$ elements of $k$ different rows and $k$ different columns, such that their sum is maximal.
It can be formulated as an Integer Programming (IP) problem over the variables $x_{i,j}$, for $i,j = 1,\ldots,n$, as follows:
\begin{equation}
	\mbox{maximize } \, \sum_{i=1}^n \sum_{j=1}^n w_{ij} x_{ij}
	\label{eq:IP_goal}
\end{equation}
subject to the conditions
\begin{align}
	\label{eq:IP_cond}
	&\sum_{j=1}^n x_{ij} \leq 1 \qquad (i = 1, \ldots, n), \nonumber \\
	&\sum_{i=1}^n x_{ij} \leq 1 \qquad (j = 1, \ldots, n), \\
	&\sum_{i=1}^n \sum_{j=1}^n x_{ij} = k, \nonumber \\
	&x_{ij}	 \in \{0,1\} \qquad (i,j = 1, \ldots, n). \nonumber 
\end{align}
	
If we replace the last condition $x_{ij} \in \{0,1\}$ in \eqref{eq:IP_cond} by the inequalities
\begin{equation}
\label{eq:LAP}
x_{ij} \geq 0 \qquad (i,j = 1, \ldots, n),
\end{equation}
we obtain the corresponding Linear Programming (LP) problem.

A LAP can also be described in the language of network flows or in terms of matroids.
In what follows we omit the word "linear" and use the term $k$-assignment for $k$-linear (optimal) assignment.

\section{Max-plus algebra}
\label{sec:maxplusalgebra}
The values of the $k$-assignment problem, $k=1, \ldots, n$, appear as the coefficients of the full characteristic polynomial of the weight matrix $W$ in the setting of max-plus algebra.
In this section we present the relevant background about max-plus algebra. 

Max-plus algebra \cite{But10} is an algebra over the semifield $\Rmax = \RR \cup \{ -\infty \}$, equipped with the operations of addition $a\oplus b$ defined as $\max(a,b)$ and multiplication $a \odot b$ defined as $a+b$ in standard arithmetic, with the unit elements $-\infty$ for addition and $0$ for multiplication.
For the sake of readability, we suppress the multiplication sign $\odot$, writing $ab$ instead of $a \odot b$ and  $ax^3$ instead of $a \odot x^{\odot 3}$.
Also, when an indeterminate $x$ appears without a coefficient, as in $x^n$, then its coefficient is naturally the multiplicative identity element, i.e. $0$.

A \textbf{(formal) maxpolynomial} of degree $d$ over $\Rmax$ in the indeterminate $x$ is an expression of the form
\begin{equation}
\label{eq:defmaxpolynomial}
p (x)= \bigoplus _{k=0} ^d a_k x^k = \max \{a_k + k x: k=0, 1, \ldots, d \}
\end{equation}
with  $a_0 , \ldots, a_d \in  \Rmax$.
Each element of the generated algebra under the max and plus operations can be reduced to a unique expression of the form \eqref{eq:defmaxpolynomial} (where normally the maxmonomials with coefficient $-\infty$ are deleted) with $x$ treated as an indeterminate.

A maxpolynomial $p(x)$ induces also a function $\hat{p}(x)$ on $\Rmax$, which is convex and piecewise-affine.
Unlike the situation in standard arithmetic, two distinct formal maxpolynomials $p_1(x)$ and $p_2(x)$ may represent the same polynomial function, that is, $\hat{p}_1(x) = \hat{p}_2(x)$ as functions.
This happens because a maxmonomial $a_kx^k$ of $p(x)$ with the property that for every value of $x$ there exists another maxmonomial $a_lx^l$, $l \neq k$, of $p(x)$, such that $a_kx^k \leq a_lx^l$, does not contribute to the function $\hat{p}(x)$ and thus may be omitted.
The term $a_kx^k$ is said to be an \textbf{essential} term of $p(x)$ when, at some interval of $\RR$, $\hat{p}(x) = a_kx^k$ as functions. 
When $a_kx^k < \hat{p}(x)$ for all values of $x$ then $a_kx^k$ is said to be \textbf{inessential}.
Otherwise, when $\hat{p}(x) = a_kx^k$ at a single point, then we say that $a_kx^k$ is \textbf{semi-essential} (in the literature this term is also called inessential).

For each function $\hat{q}(x)$ there exists a unique maxpolynomial representation $p(x)$, such that every power $x^k$ of $p(x)$ appears with the maximal possible coefficient $a_k$, as long as the functional equality $\hat{p}(x)=\hat{q}(x)$ holds.
We then say that $p(x)$ is in \textbf{full canonical form} (in FCF) \cite{CGM80}.
Note that when $p(x)$ is in FCF than all its maxmonomials are either essential or semi-essential.
We adopt the convention $-\infty - (-\infty) = -\infty$.
\begin{proposition} [\cite{RLP19}]
	Let $p(x) = \bigoplus_{k=0}^{n} a_k x^k$ be a maxpolynomial of degree $n \geq 1$ and let $\lambda_1 \leq \lambda_2 \leq \cdots \leq \lambda_n$ be its roots.
	Then the following are equivalent characterizations of $p(x)$ to be in full canonical form.
	\begin{enumerate}
		\item $p(x) = a_n (x \oplus s_1) \cdots (x \oplus s_n)$ formally, for some $s_1, \ldots, s_n$.
		\item $p(x) = a_n (x \oplus \lambda_1) \cdots (x \oplus \lambda_n)$ formally.
		\item $\lambda_k = a_{k-1} - a_k$, $k = 1, \ldots, n$.
		\item Concavity: $a_k \geq (a_{k-1}+a_{k+1})/2$, $k = 1, \ldots, n-1$.
		\item $\hat{p}(\lambda_k) = a_k \lambda_k^k$ ($a_k + k \lambda_k$ in standard arithmetic) for $k = 1, \ldots, n$.
	\end{enumerate}
	\label{prop:fullcanonical}
\end{proposition}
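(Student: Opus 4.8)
The plan is to make the concavity condition~(4) the hub: I would first establish one elementary coefficient computation, use it to tie the two factorizations~(1),~(2) and the root formula~(3) together, then prove~(4) equivalent to being in full canonical form, and finally read off the envelope identity~(5) from the ordering of the roots. The workhorse is the following. For reals $s_1 \le s_2 \le \cdots \le s_n$, expanding the formal product $a_n (x\oplus s_1)\cdots(x\oplus s_n)$ and collecting powers, the coefficient of $x^j$ is the max-plus elementary symmetric expression $a_n \odot \bigoplus_{|T|=n-j}\bigodot_{i\in T} s_i$; since a maximum of a sum of $n-j$ of the $s_i$ is attained by taking the largest ones, this equals $a_n + (s_{j+1}+\cdots+s_n)$. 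Hence consecutive coefficients telescope so that $(\text{coeff of } x^{j-1}) - (\text{coeff of } x^j) = s_j$, and the corners of $x\mapsto a_n+\sum_i\max(x,s_i)$, i.e.\ the roots of the product, are exactly $s_1,\dots,s_n$.

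From this computation the block (1)$\Leftrightarrow$(2)$\Leftrightarrow$(3)$\Leftrightarrow$(4) follows quickly. Given~(1), I would sort the chosen $s_i$ and read off $a_{j-1}-a_j=s_j$ together with the fact that the $s_j$ are the roots, so that $\lambda_j=s_j=a_{j-1}-a_j$; as the $s_j$ are sorted this yields~(3) and, after relabelling $s_j$ as $\lambda_j$, the canonical factorization~(2). Conversely, assuming~(4) I would set $s_j:=a_{j-1}-a_j$; the discrete concavity inequalities say precisely that $s_1\le\cdots\le s_n$, and the coefficient formula then reproduces every coefficient of $p$ (using $\sum_{i>j}s_i=a_j-a_n$), so $p$ factors as in~(2), hence~(1). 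The step (3)$\Rightarrow$(4) is immediate, since $\lambda_j=a_{j-1}-a_j$ with $\lambda_1\le\cdots\le\lambda_n$ is the concavity inequality, and (2)$\Rightarrow$(1) is trivial.

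The genuinely structural step is FCF$\;\Leftrightarrow\;$(4), which I would argue through the upper concave hull of the points $(0,a_0),\dots,(n,a_n)$. The term $a_kx^k$ is essential or semi-essential exactly when $(k,a_k)$ lies on this hull, and $\hat p$ is left unchanged precisely when no coefficient is raised above its hull value; thus $p$ is in full canonical form iff every $(k,a_k)$ lies on the hull, which for a finite sequence holds iff the midpoint inequalities $a_k\ge(a_{k-1}+a_{k+1})/2$ hold, i.e.\ iff~(4). For~(5) I would use the ordering of the roots: at $x=\lambda_k=a_{k-1}-a_k$ one verifies $a_k+k\lambda_k\ge a_j+j\lambda_k$ for every $j$ by telescoping $a_k-a_j$ into a sum of roots and comparing each with $\lambda_k$ (those of index $>k$ are $\ge\lambda_k$, those of index $\le k$ are $\le\lambda_k$); this shows the $k$-th term attains the envelope at $\lambda_k$, giving~(5), while conversely~(5) places each $(k,a_k)$ on the hull and so returns~(4).

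I expect the main obstacle to be the FCF$\;\Leftrightarrow\;$(4) step rather than the algebra, since it is where one must pin down, through the duality between the formal coefficients and the piecewise-affine envelope $\hat p$, that ``maximal admissible coefficient'' means ``lies on the upper concave hull'', and where the semi-essential (single-point) terms and any $-\infty$ coefficients have to be handled carefully, using the stated convention $-\infty-(-\infty)=-\infty$ together with a limiting argument.
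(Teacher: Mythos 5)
The paper itself never proves this proposition: it is imported from \cite{RLP19} as a black box (and, for the paper's main argument, only the direction ``concavity (4) $\Rightarrow$ FCF'' is ever used, via Proposition~\ref{pr:concave}). So there is no internal proof to compare you against, and your proposal must be judged on its own merits; judged so, it is a correct and essentially complete plan. Your workhorse computation is right: for $s_1\le\cdots\le s_n$ the coefficient of $x^j$ in $a_n(x\oplus s_1)\cdots(x\oplus s_n)$ is $a_n+s_{j+1}+\cdots+s_n$, the differences of consecutive coefficients are the $s_j$, and the corners of the induced function are the $s_j$; this does give the cycle $(1)\Rightarrow(2),(3)$, $(3)\Rightarrow(4)$, $(4)\Rightarrow(2)\Rightarrow(1)$ by telescoping, since symmetry of the formal expansion lets you sort the $s_i$. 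Your treatment of (5) is also correct: from (3), splitting the comparison $a_k+k\lambda_k\ge a_j+j\lambda_k$ into $j>k$ and $j<k$ via telescoped sums of ordered roots gives (5), and conversely a term that touches the envelope cannot have its coefficient raised, so (5) forces every coefficient to be maximal (the endpoint coefficients $a_0,a_n$ being automatically maximal, which is why (5) may start at $k=1$), closing the loop through FCF. Two points need more care than your sketch provides, and you correctly flag both. First, the pivot claim ``maximal admissible coefficient equals the value of the upper concave hull of $(0,a_0),\dots,(n,a_n)$,'' i.e.\ that $\min_x\bigl(\hat p(x)-kx\bigr)$ equals the hull at $k$, is the one genuine lemma here; it deserves an explicit argument (the convex piecewise-affine function $x\mapsto\max_j\bigl(a_j+(j-k)x\bigr)$ attains its minimum where the slope changes sign, and there it equals the affine interpolation between two points $(i,a_i)$, $(j,a_j)$ with $i\le k\le j$). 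Second, the $s_i$ and roots must be taken in $\Rmax$ rather than $\RR$: coefficients equal to $\maxzero$ and the root $\maxzero$ with its multiplicity convention must be traced through the telescoping identities using the convention $-\infty-(-\infty)=-\infty$, which works out but is not automatic. Neither issue breaks your structure, so your proposal supplies a valid self-contained proof of a statement the paper only cites.
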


The \textbf{max-plus roots} (\textbf{tropical} \textbf{roots}) of a maxpolynomial $p(x)$ are the points at which $\hat{p}(x)$ is non-differentiable.
The multiplicity of a root equals the change of the slope of $\hat{p}(x)$ at that root.
Equivalently, the roots of $p(x)$ are the values $\lambda \neq \maxzero$ of $x$ at which several maxmonomials $a_kx^k, a_{k+1}x^{k+1}, \ldots, a_{k+d}x^{k+d}$ in the corresponding full canonical form satisfy $a_k \lambda^k = a_{k+1}\lambda^{k+1} = \cdots = a_{k+d}\lambda^{k+d} = \hat{p}(\lambda)$ 
and the multiplicity of the root $\lambda$ is then $d$, the difference between the largest and the smallest index of these maxmonomials.
We also count $-\infty$ as a root with multiplicity $d$ whenever $a_0, a_1, \ldots , a_{d-1}$ are all equal to $\maxzero$ and $a_d \neq \maxzero$.
\subsection{The (full) characteristic maxpolynomial of a matrix}
\label{sec:charpoly}
Given an $n \times n$ matrix $A \in \Rnnmax$, the \textbf{max-plus permanent} of $A$ is
$$
	\maxperm(A) = \bigoplus_{\sigma \in \SG_n} a_{1 \sigma(1)} \cdots a_{n \sigma(n)},
$$
where $\SG_n$ is the group of permutations on $[n] =\{1, \ldots,n \}$.
The \textbf{characteristic maxpolynomial} of $A$, defined in \cite{CG83}, is
$$
	\chi_A (x) = \maxperm (x I \oplus A),
$$
where $I$ is the max-plus identity matrix with all entries on the main diagonal being 0 and all off-diagonal entries being $\maxzero$.
It is a maxpolynomial of degree $n$, say $\chi_A(x)=\bigoplus_{k=0}^n a_k x^{n-k}$, with $a_k$ being the maximal value among all principal maxpermanent minors of order $k$ of $A$.
The (tropical) roots of the characteristic maxpolynomial $\chi_A (x)$ are called the \textbf{max-plus eigenvalues} of $A$.
As shown in \cite{ABG01}, they can be asymptotically computed from the eigenvalues of an associated parametrized standard matrix with exponential entries.

Another maxpolynomial related to $A$ is its \textbf{full characteristic maxpolynomial}, defined as
$$
	\fullchi{A} (x) = \maxperm (x 0 \oplus A),
$$
where $0$ is the matrix with zeros in all its entries.
The difference between $\fullchi{A} (x)$ and $\chi_A (x)$ is that in $\fullchi{A} (x)$ the indeterminate $x$ appears in all entries instead of just on the diagonal.
The coefficients $a_k$ of $\fullchi{A}(x)=\bigoplus_{k=0}^n a_k x^{n-k}$ equal the maximal value among all maxpermanent minors (not just principal maxpermanent minors) of order $k$ of $A$.
That is, $a_k$ equals the value of a $k$-assignment with weight matrix $A$.
The (tropical) roots of the full characteristic maxpolynomial $\fullchi{A} (x)$ are called the \textbf{max-plus singular values} of $A$.
Similar to the max-plus eigenvalues, the max-plus singular values can be asymptotically computed from standard singular values (see \cite{Hook14} and \cite{DeSDeM02}).

An essential (formal) property of the full characteristic maxpolynomial, which distinguishes it from the characteristic maxpolynomial, is that it is in full canonical form.
This property allows us to compute efficiently all the $k$-assignments. 
\begin{proposition} [see e.g. \cite{RLP19}]
	Let $A \in \Rnnmax$. Then the full characteristic maxpolynomial $\fullchi{A} (x)$ is in FCF.
	\label{pr:fullchar_is_FCF}
\end{proposition}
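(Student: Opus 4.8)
The plan is to verify the concavity characterisation, part~(4) of Proposition~\ref{prop:fullcanonical}. Writing $\fullchi{A}(x)=\bigoplus_{k=0}^{n} a_k x^{n-k}$, the coefficient $a_k$ is the optimal value of a $k$-assignment, and since concavity of a finite sequence is unchanged when the order of its indices is reversed, it suffices to establish the concavity inequalities for the sequence $(a_k)$ directly, i.e.
\[
2a_k \ \geq\ a_{k-1}+a_{k+1}, \qquad 1 \leq k \leq n-1.
\]
If either $a_{k-1}$ or $a_{k+1}$ equals $\maxzero$, the inequality holds trivially under the stated convention $-\infty-(-\infty)=-\infty$, so I may assume both are finite and are realised by optimal matchings $M_{k-1}$ and $M_{k+1}$ of cardinalities $k-1$ and $k+1$.

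The heart of the argument is an exchange that turns $M_{k-1}$ and $M_{k+1}$ into two matchings of cardinality exactly $k$ without losing total weight. First I would form the symmetric difference $D=M_{k-1}\triangle M_{k+1}$. In a bipartite graph the union of two matchings has maximum degree $2$, so $D$ decomposes into vertex-disjoint components, each an alternating path or an even alternating cycle, and every vertex covered by a common edge of $M_{k-1}\cap M_{k+1}$ is disjoint from $D$. Along each component the edges alternate between the two matchings, so its edge set splits canonically into the two classes $(\,\cdot\,\cap M_{k-1})$ and $(\,\cdot\,\cap M_{k+1})$, each a matching. For cycles and even-length paths these two classes have equal size; for an odd-length path they differ by exactly one edge, and I am free to decide which of the two output matchings, $N_1$ or $N_2$, receives the larger class. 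Distributing the components in this way, and placing one copy of each common edge into each of $N_1$ and $N_2$, produces two matchings whose combined edges form exactly the multiset $M_{k-1}\cup M_{k+1}$, so that $w(N_1)+w(N_2)=a_{k-1}+a_{k+1}$.

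The one thing that must still be arranged is that $N_1$ and $N_2$ each have cardinality $k$; this is the step I expect to be the crux. Since $|N_1|+|N_2|=|M_{k-1}|+|M_{k+1}|=2k$, it suffices to force $|N_1|=|N_2|$, and the only source of imbalance is the family of odd-length paths, each contributing $\pm1$ with a sign I may choose. The parity fact that unlocks this is that $|D|=|M_{k-1}|+|M_{k+1}|-2\,|M_{k-1}\cap M_{k+1}|$ is even, while cycles and even-length paths each contribute an even number of edges; hence the number of odd-length paths is even, and splitting them into two equal groups of opposite sign yields $|N_1|=|N_2|=k$. As the components and the common edges are mutually vertex-disjoint, $N_1$ and $N_2$ are genuine $k$-matchings, so each has weight at most $a_k$, whence $a_{k-1}+a_{k+1}=w(N_1)+w(N_2)\leq 2a_k$. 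This is the required concavity, so by Proposition~\ref{prop:fullcanonical} the maxpolynomial $\fullchi{A}(x)$ is in full canonical form. Note that the construction also shows $a_k$ to be finite whenever $a_{k-1}$ and $a_{k+1}$ are, so no degenerate $\maxzero$ case can violate the inequality.
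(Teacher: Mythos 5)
Your proof is correct, but it takes a genuinely different route from the paper, which in fact offers two proofs of this proposition. The paper's primary argument is via linear programming: the LP relaxation of the $k$-assignment IP has optimal values that are concave in $k$ (by averaging feasible solutions), and total unimodularity of the constraint matrix forces the LP and IP optima to coincide, so concavity passes to the coefficients $a_k$. The paper's alternative combinatorial proof (Proposition~\ref{pr:concave}) first establishes Lemma~\ref{lem:k-assign}, that consecutive optimal assignments can be chosen nested, so that $M_{k+1} \triangle M_{k-1}$ consists of exactly two vertex-disjoint augmenting paths; it then takes the path of larger gain to build a $k$-matching of weight at least $(\WW_{k-1}+\WW_{k+1})/2$. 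You avoid both the LP machinery and the nesting lemma: you work with arbitrary optimal $M_{k-1}$ and $M_{k+1}$, decompose their symmetric difference into alternating paths and even cycles, and redistribute the components into two cardinality-$k$ matchings $N_1$, $N_2$ with $w(N_1)+w(N_2)=a_{k-1}+a_{k+1}$, using the parity observation that the number of odd paths is even to balance the cardinalities; your parity claim checks out, since $|M_{k+1}|-|M_{k-1}|=2$ forces the odd paths to split as $|P^+|=|P^-|+2$. This two-sided exchange is self-contained and slightly cleaner in that it never needs to order gains or invoke optimality of an intermediate assignment, and it treats the $-\infty$ degeneracies explicitly; what the paper's combinatorial route buys instead is reusable structure --- Lemma~\ref{lem:k-assign} and the gain bookkeeping along augmenting paths --- on which the proof of Theorem~\ref{th:adjacent} and Algorithm~\ref{alg:semiessential} are later built.
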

Indeed, the coefficients of $\fullchi{A}(x)=\bigoplus_{k=0}^n a_k x^{n-k}$ are the solutions of the IP problem \eqref{eq:IP_goal}, \eqref{eq:IP_cond}.
If we look at the corresponding LP problem \eqref{eq:IP_goal} under the conditions \eqref{eq:IP_cond}, with \eqref{eq:LAP} replacing the last condition of \eqref{eq:IP_cond}, then it is clear that the set of solutions $\WW_k$ to the $k$-LP problems form a concave set:
$\WW_k \geq (\WW_{k-1}+\WW_{k+1})/2$,
which is one of the equivalent conditions appearing in Proposition~\ref{prop:fullcanonical}.
The proof would be complete if these solutions were integers.
However, since the standard form of the constraint matrix of our LP problem is a $(2n+1)\times(n^2+2n)$ \emph{totally unimodular} matrix \cite{DM97} then the solutions are indeed integers: $\WW_k = a_k$.

\section{The sequence of $k$-assignments is concave}
We give here a direct proof of the FCF property of the full characteristic polynomial without referring to linear programming or network flow but rather demonstrate it on the bipartite graph.
The method of proof will serve us in what will follow.

First, we show the known fact (see \cite{DM97}) that it is always possible to form a (maximal) $(k+1)$-assignment from the set of vertices matched by a $k$-assignment and an additional pair of vertices.
Given a set $E'$ of matched pairs in a bipartite graph $G(U,V;E)$, an \textbf{alternating path} in $E$ with respect to $E'$ is a path of positive length (may also be of length 1)
that alternately switches between edges of $E'$ and those of $E \setminus E'$. 
If an alternating path starts and ends in a vertex of $E \setminus E'$ then it is called \textbf{augmenting}.
\begin{lemma}
	Let $K_{n,n} = G(U,V;E)$ be a complete bipartite weighted graph on vertices $U=\{u_1,\ldots,u_n\}$ and $V=\{v_1,\ldots,v_n\}$.
	Let $M_k$, $k=1,\ldots,n$ be a sequence of $k$-assignments on $K_{n,n}$.
	Then, after possibly renaming the vertices, the set of matched vertices in $M_k$ may be chosen to be $\{u_1,\ldots,u_k, v_1,\ldots,v_k\}$, for $k=1,\ldots,n$.
	\label{lem:k-assign}
\end{lemma}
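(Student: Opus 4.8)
The plan is to prove the statement by induction on $k$, constructing the sequence $M_1, \ldots, M_n$ so that each $M_{k+1}$ arises from $M_k$ by augmenting along a best alternating path; the renaming of vertices is then read off from the order in which new vertices enter the matching. For the base case $k=1$, an optimal $1$-assignment is a single edge $e_{ij}$ of maximum weight, and renaming its endpoints $u_1$ and $v_1$ gives the matched set $\{u_1,v_1\}$. For the inductive step, suppose $M_k$ is an optimal $k$-assignment whose matched vertices are $\{u_1,\ldots,u_k,v_1,\ldots,v_k\}$. Since $K_{n,n}$ is complete and $k<n$, augmenting paths with respect to $M_k$ exist (the single edge joining two unmatched vertices is one), and among the finitely many simple augmenting paths I would pick one, $P$, of maximal weight gain $g(P) := w(M_k \triangle P) - w(M_k)$, and set $M_{k+1} := M_k \triangle P$. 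Because $P$ starts and ends at $M_k$-unmatched vertices $u \in U$, $v \in V$ and leaves the matched/unmatched status of all interior vertices unchanged, the matched set of $M_{k+1}$ is exactly that of $M_k$ together with $\{u,v\}$; relabelling $u =: u_{k+1}$ and $v =: v_{k+1}$ yields $\{u_1,\ldots,u_{k+1},v_1,\ldots,v_{k+1}\}$, which is the nestedness we want.

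The content of the step is that $M_{k+1}$ is again \emph{optimal}, and this is where I expect the real work to lie. I would let $N$ be any optimal $(k+1)$-assignment and analyse the symmetric difference $D := M_k \triangle N$, whose components are alternating paths and cycles because each vertex meets at most one edge of each matching. Writing the gain of a subgraph $X$ as $\gamma(X) := w(N \cap X) - w(M_k \cap X)$, optimality of $M_k$ forces $\gamma \le 0$ on every alternating cycle and every even-length path, since flipping $M_k$ along such a (vertex-disjoint) component yields another $k$-assignment. The odd-length paths split into augmenting ones (one excess $N$-edge) and reducing ones (one excess $M_k$-edge), and since $\lvert N\rvert = \lvert M_k\rvert + 1$ there is exactly one more augmenting than reducing path. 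Pairing each reducing path with an augmenting one produces an even alternating subgraph, so again $\gamma \le 0$ on each such pair by optimality of $M_k$, and one augmenting path $P_0$ is left unpaired. Collecting the contributions gives $w(N) - w(M_k) = \sum_{\text{comp}} \gamma \le \gamma(P_0)$. One then checks that both endpoints of $P_0$ are $M_k$-unmatched, so that $P_0$ is a genuine augmenting path and $\gamma(P_0) \le g(P)$ by the choice of $P$; hence $w(N) \le w(M_k) + g(P) = w(M_{k+1})$, proving $M_{k+1}$ optimal.

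The main obstacle is precisely this decomposition-and-pairing argument: one must verify that the components of $D$ are exactly paths and cycles, classify the odd paths correctly, and argue that each of the three families (cycles, even paths, paired odd paths) contributes nonpositively because the corresponding modification of $M_k$ remains a valid $k$-assignment in the complete bipartite graph. The remaining points are routine, namely that an unpaired augmenting path really does have both endpoints unmatched by $M_k$ (otherwise some vertex would meet two edges of the same matching), and the bookkeeping identifying $\gamma(P_0)$ with the gain of an honest augmenting path. Edge weights equal to $-\infty$ cause no difficulty once one uses the stated convention $-\infty - (-\infty) = -\infty$, since an optimal assignment of value $-\infty$ is then compared against another such value. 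Iterating the step from $k=1$ up to $k=n$ and relabelling at each stage produces the desired nested sequence with matched sets $\{u_1,\ldots,u_k,v_1,\ldots,v_k\}$, which completes the proof.
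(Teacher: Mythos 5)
Your proof is correct, but it is organized differently from the paper's. The paper works directly with the two given optimal assignments $M_k$ and $M_{k+1}$: it decomposes $E_k \cup E_{k+1}$ into vertex-disjoint alternating components, picks an augmenting path $p$ (which exists because $\lvert E_{k+1}\rvert = \lvert E_k\rvert + 1$), and then uses a two-sided exchange argument---optimality of $M_{k+1}$ forces $w(M_k \setminus p) \leq w(M_{k+1} \setminus p)$, while optimality of $M_k$ forces the reverse inequality---to conclude that $M_k \triangle p$ is again an optimal $(k+1)$-assignment, from which the nesting follows at once. You instead run an induction, define $M_{k+1} := M_k \triangle P$ for a \emph{maximum-gain} augmenting path $P$, and certify its optimality by comparing against an arbitrary optimal $N$ via the decomposition of $M_k \triangle N$, pairing each reducing path with an augmenting one and isolating the unpaired augmenting path $P_0$. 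Both arguments share the same combinatorial core (alternating path/cycle decomposition of two matchings plus exchange steps justified by optimality), but yours proves a stronger intermediate fact---the correctness of the successive best-augmenting-path construction, i.e.\ essentially of the primal network-flow algorithm the paper cites as known---at the price of the pairing bookkeeping and of having to select an optimal $P$; the paper's version is shorter precisely because it never needs a best path: \emph{any} augmenting component of the decomposition works, since everything outside it can be swapped at equal weight. A minor point in your favor: the paper's proof states that $E_k \cup E_{k+1}$ decomposes into alternating paths without mentioning cycles or even paths (which can occur when optima are not unique), whereas your classification into cycles, even paths, augmenting and reducing paths, together with the check that an unpaired augmenting path has both endpoints $M_k$-unmatched, handles these cases explicitly.
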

\begin{proof}
	Let $E_k, E_{k+1}$ be the set of edges of $M_k, M_{k+1}$, respectively.
	The edges of $E_k \cup E_{k+1}$ form a disjoint (with no common vertices) union of alternating paths.
	Because $|E_{k+1}| > |E_k|$, at least one these paths $p$ is augmenting: it starts in a vertex $u_{k+1} \in M_{k+1} \setminus M_k$, ends in a vertex $v_{k+1} \in M_{k+1} \setminus M_k$, and all the inner vertices are in $M_k \cap M_{k+1}$.
	By the maximality of total weight of the two assignments, we may take the rest of the matches of $M_k$ to be also in $M_{k+1}$, that is, $M_{k+1} = M_k \triangle p$, the symmetric difference of $M_k$ and $p$.
	Hence, $M_{k+1}$ contains exactly one pair of vertices that is not in $M_k$.
\end{proof}
In terms of matrices, Lemma~\ref{lem:k-assign} says that given a matrix $W \in \Rmax ^{n \times n}$, there exit permutation matrices $P,Q \in \RR^{n \times n}$, such that the $k$-assignments in $W'=PWQ$ occur on its leading principal $k \times k$ submatrices, for $k=1,\ldots,n$.

We give now an alternative proof of Proposition~\ref{pr:fullchar_is_FCF} in the language of $k$-assignments on bipartite graphs.
We define a $0$-assignment to be $M_0 = \emptyset$ with weight $\WW_0=-\infty$.
\begin{proposition}
	Given a complete bipartite weighted graph $K_{n,n}$, $n \geq 2$, the sequence of weights $\WW_k$, $k=0,\ldots,n$, of its $k$-assignments is concave. 
	\label{pr:concave}
\end{proposition}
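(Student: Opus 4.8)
The goal is the single inequality $2\WW_k \ge \WW_{k-1}+\WW_{k+1}$ for $k=1,\dots,n-1$; for $k=1$ it is immediate since $\WW_0=\maxzero$ forces the right-hand side to be $\maxzero$, so I would assume $k\ge 2$. The plan is to fix optimal assignments $M_{k-1}$ and $M_{k+1}$, of weights $\WW_{k-1}$ and $\WW_{k+1}$, and to redistribute their combined edges into \emph{two} assignments, each of cardinality exactly $k$ and of the same total weight $\WW_{k-1}+\WW_{k+1}$. Since each of these two matchings has weight at most $\WW_k$ by optimality, summing the two bounds yields $2\WW_k \ge \WW_{k-1}+\WW_{k+1}$, which is exactly the asserted concavity. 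The appeal of this route is that the redistribution itself is purely combinatorial and uses optimality of $M_{k-1}$ and $M_{k+1}$ only at the very last step.

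First I would superimpose $M_{k-1}$ and $M_{k+1}$, keeping multiplicities, to form a multigraph $H$ in which an edge common to both appears twice. Exactly as in the proof of Lemma~\ref{lem:k-assign}, every vertex of $H$ has degree at most two, one edge from each matching, so the components of $H$ are alternating paths and alternating (even, or doubled) cycles. Colouring the $M_{k-1}$-edges red and the $M_{k+1}$-edges blue, a path has both endpoints of the same colour precisely when its length is odd, in which case its red/blue imbalance is $\pm 1$; even-length paths and all cycles are balanced. Because $|M_{k+1}|-|M_{k-1}|=2$ while cycles and doubled edges contribute no imbalance, the number of $+1$-paths exceeds the number of $-1$-paths by exactly two, so the total number of odd-length paths is \emph{even}.

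Next I would two-colour the edges of $H$ into two classes $N_1,N_2$ by alternating colours along each component. Since the components are vertex-disjoint and each is a path or cycle, each class is automatically a matching. Every cycle and every even path splits evenly between $N_1$ and $N_2$, whereas each odd path contributes one surplus edge to whichever class I orient it toward. Using the orientation freedom on the evenly many odd paths, I would route half of the surpluses to each class; as $H$ has $2k$ edges in total and the two class sizes are thereby made equal, each of $N_1,N_2$ ends up with exactly $k$ edges. By construction $N_1$ and $N_2$ together use every edge of $M_{k-1}$ and of $M_{k+1}$ exactly once, whence $w(N_1)+w(N_2)=\WW_{k-1}+\WW_{k+1}$, and $w(N_1),w(N_2)\le\WW_k$ closes the argument.

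The hard part will be the balancing step: I must guarantee that the two matchings can be forced to the \emph{same} cardinality $k$, which rests on the parity observation that there is an even number of odd-length paths, combined with the per-path orientation freedom. A secondary issue is the degenerate behaviour of $\maxzero$-weights — when an optimal $(k-1)$- or $(k+1)$-assignment is forced onto a $\maxzero$ edge the relevant weights are $\maxzero$ and the inequality holds trivially under the convention $\maxzero-(\maxzero)=\maxzero$ — together with the boundary indices $k=1$ and $k=n-1$, which I would dispose of at the outset so that both $M_{k-1}$ and $M_{k+1}$ genuinely exist.
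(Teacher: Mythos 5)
Your proof is correct, but it takes a genuinely different route from the paper's. The paper first establishes Lemma~\ref{lem:k-assign} (optimal assignments can be chosen nested), so that $M_{k+1}\triangle M_{k-1}$ may be assumed to consist of exactly two augmenting paths $p_1,p_2$ with gains $g_1\geq g_2$; it then forms the single candidate matching $M'_k=M_{k-1}\triangle p_1$, whose weight is at least $\WW_{k-1}+(g_1+g_2)/2$, and invokes optimality of $\WW_k$ once. Your argument needs no nesting lemma: you take \emph{arbitrary} optimal $M_{k-1}$ and $M_{k+1}$, decompose their multiset union into alternating paths and cycles, and use the parity observation (the $\pm1$ imbalances of odd paths must sum to $2$, so there are evenly many odd paths) to repartition all $2k$ edges into two matchings $N_1,N_2$ of cardinality exactly $k$ each; optimality is then used twice and the two bounds are summed. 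Your parity/balancing computation checks out: balanced components split evenly, the $t$ odd-path surpluses are routed $t/2$ to each side, giving $|N_1|=|N_2|=k$, and $w(N_1)+w(N_2)=\WW_{k-1}+\WW_{k+1}$ since every edge (with multiplicity, doubled edges contributing one copy to each class) is used exactly once. What the paper's route buys is brevity given Lemma~\ref{lem:k-assign} --- which it needs anyway for Theorem~\ref{th:adjacent} --- and an explicit mechanism (augment $M_{k-1}$ by the single best path) that is reused there and in Algorithm~\ref{alg:semiessential}. What your route buys is self-containedness and slightly greater care: you do not rely on the lemma (whose stated proof ignores the possibility of alternating cycles in $E_k\cup E_{k+1}$, which your decomposition handles explicitly), and your exchange argument proves concavity of the maximum-weight $k$-matching function in any graph, not just in a complete bipartite one. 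Your dispatching of the boundary case $k=1$ via $\WW_0=\maxzero$ and of $\maxzero$-weight degeneracies is consistent with the paper's conventions.
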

\begin{proof}
	Let $M_{k-1}$, $M_k$ and $M_{k+1}$ be $3$ consecutive assignments with weights $\WW_{k-1}$, $\WW_k$ and $\WW_{k+1}$, respectively.
	For the concavity property, we need to show that $\WW_k \geq (\WW_{k-1}+\WW_{k+1})/2$.
	By Lemma~\ref{lem:k-assign}, we may assume that $M_{k+1}$ matches the vertices matched in $M_{k-1}$ and additional $4$ vertices and it consists of some of the pairs matched in $M_{k-1}$ and other matched pairs which form two augmenting paths $p_1,p_2$ with the rest of the edges of $M_{k-1}$.
	That is,
	$$
	p_1 \cap p_2 = \emptyset
	$$
	and
	$$
		p_1 \cup p_2 = M_{k+1} \triangle M_{k-1}.
	$$
	For $i=1,2$, let
	$$
		g_i = w(p_i \cap M_{k+1}) - w(p_i \cap M_{k-1})
	$$
	be the gain exhibited by $p_i$, that is, the difference between the total weight $w$ of the pairs of $M_{k+1}$ and those of $M_{k-1}$ in $p_i$.
	Then 
	$$
		\WW_{k+1} = \WW_{k-1} + g_1 + g_2.
	$$	
	Suppose, without loss of generality, that $g_1 \geq g_2$.
	Then we form a (not necessarily maximal) matching $M'_k$,
	$$
		M'_k = M_{k-1} \triangle p_1
	$$
	and we have
	$$
		g_1 = w(M'_k) - \WW_{k-1}.
	$$
	By the maximality of $\WW_k$,
	$$
		\WW_k \geq w(M'_k) \geq (\WW_{k-1}+\WW_{k+1})/2
	$$
	and the proof is complete.
\end{proof}

\begin{example}
	Let 
	$$
	A = \begin{bmatrix}
	-\infty & 8 & 5 & 0 \\
	10  & 8 & 5 & -\infty \\
	8 & 0 & 5 & 4 \\
	5 & 4 & -\infty & -\infty
	\end{bmatrix}
	$$
	be the matrix representation of a weighted bipartite graph $K_{4,4}$ with disjoint sets $U$ (rows) and $V$ (columns), that is, the weight of the edge $(u_i,v_j)$  is $A(i,j)$.
	The $k$-assignments, $1 \leq k \leq 4$, are: $M_1 = \{(u_2,v_1)\}$ of weight $\WW_1=10$, $M_2 = \{(u_2,v_1)$, $(u_1, v_2)\}$ of weight $\WW_2=10+8=18$, $M_3 = \{(u_2,v_1)$, $(u_1, v_2)$, $(u_3,v_3)\}$ of weight $\WW_3=10+8+5=23$ and  $M_4 = \{(u_2,v_1)$, $(u_1, v_3)$, $(u_3,v_4)$, $(u_4, v_2)\}$ of total weight $\WW_4=10+5+4+4=23$.
	The sequence of weights $(10,18,23,23)$ is clearly concave and it forms the coefficients of the full characteristic maxpolynomial $\fullchi{A} (x) = x^4 \oplus 10x^3 \oplus 18x^2 \oplus 23x \oplus 23$. 
\end{example}

\section{Computing the $k$-assignments}
In \cite{GK10} Gassner and Klinz presented an algorithm which computes the \textit{essential} terms $a_k x^{n-k}$ of the characteristic maxpolynomial $\chi_A(x)$ of a matrix $A \in \Rnnmax$, including the corresponding \textit{principal} $k$-assignments.
The same algorithm works also for the full characteristic maxpolynomial $\fullchi{A} (x)$, and as Hook showed \cite{Hoo16}, it can be further extended to the computation of the maxpermanent of a matrix whose entries are maxpolynomials.
Since the full characteristic maxpolynomial is in FCF then computing its semi-essential terms after having its essential ones is immediate.
We will show that it is also possible to efficiently compute \textit{all} the $k$-assignments after having the ones obtained by the algorithm of Gassner and Klinz.
\subsection{Computing the $k$-assignments corresponding to essential terms}
For the sake of completeness, we describe here the algorithm of Gassner and Klinz (the G-K algorithm) for the computation of the $k$-assignments that refer to the essential terms of the full characteristic maxpolynomial.
The major steps of the algorithm are described in Algorithm~\ref{alg:essential}.
\begin{algorithm}
	\caption{Computing the essential $k$-assignments}
	\begin{algorithmic}
		\STATE \textbf{Input:} $W$ \COMMENT{weight matrix} 
		\STATE \textbf{Output:} $out[k]$ \COMMENT{list of assignments $M_k$ referring to essential terms of $\fullchi{W} (x)$}; \\
		$(\lambda_j,mult_j)$ \COMMENT{list of the max-plus singular values of $W$ including multiplicities}
		\STATE \textbf{Initialization:} \\
		\quad $M_0 (x) \leftarrow \{e^{x}_{ii} \, | \, i=1,\ldots,n \}$ \COMMENT{initial assignment}; \\
		\quad $\overleftarrow{G}(x) \leftarrow$ initial residual graph; $T \leftarrow$ initial longest path tree; \\
		\quad $\{\kappa(q)\} \leftarrow$ \mbox{initial vertex keys}; \\
		\quad  $j \leftarrow 0$; $k \leftarrow 0$; $b \leftarrow \infty$ 
		\WHILE{$(k<n) \wedge (b > -\infty)$}
		\STATE $b \leftarrow \mbox{new maximal vertex key}$
		\STATE $e \leftarrow \mbox{new pivot edge}$
		\STATE $p \leftarrow \mbox{new pivot vertex}$
		\STATE Update $T$ with respect to $e$ and $p$
		\IF{$T$ contains a cycle $C(x)$}
		\STATE $j \leftarrow j+1$
		\STATE $d \leftarrow -w^x(C(x))$ 
		\STATE $(\lambda_j,mult_j) \leftarrow (b,d)$ \COMMENT{(new max-plus singular value of $W$, its multiplicity)}
		\STATE $k \leftarrow k+d$
		\STATE $M_k (x) \leftarrow M_k (x) \triangle C(x) $ \COMMENT{new assignment}
		\STATE $out[k] \leftarrow M_k$ \COMMENT{constant edges of new assignment}
		\STATE Update $\overleftarrow{G}_k$, $T$, $\{\kappa(q)\}$
		\ELSE
		\STATE Update $\{\kappa(q)\}$
		\ENDIF
		\ENDWHILE
		\IF{$k<n$}
		\STATE $j \leftarrow j+1$
		\STATE $d \leftarrow n-k$ 
		\STATE $(\lambda_j,mult_j) \leftarrow (-\infty,d)$ \COMMENT{(new max-plus singular value of $W$, its multiplicity)} 
		\ENDIF
	\end{algorithmic}
	\label{alg:essential}
\end{algorithm}

Given a weight (cost) matrix $W=(w_{ij})$ of size $n \times n$, we form the directed bipartite graph $K_{n,n} = G(U,V;E)$ enhanced with weights $w_{ij}$ applied to the "constant edges" $e_{ij}$ from the vertices $u_i$ to the vertices $v_j$.
Then we add $n^2$ "parametric edges" $e^{x}_{ij}$ directed from $u_i$ to $v_j$, for $i,j=1,\ldots,n$, each edge of weight $x$.
Finally, we add a root $r$ and $n$ edges $e^r_i$, each of weight $0$, that start in $r$ and terminate in $u_i$, $i=1,\ldots,n$.
These edges remain fixed throughout the algorithm, whereas the other edges may change direction, accompanied by negation of their weight.
The obtained graph is denoted $G(x)$.
   
As in Hook's presentation of the algorithm \cite{Hoo16}, we begin with a value $b_0$ which is larger than the maximal entry of $W$.
Clearly, for any value of $x \in [b_0,\infty)$, an assignment (matching of cardinality $n$ of maximal weight) in $G(x)$ contains $n$ pairwise non-adjacent parametric edges, so that the weight of the assignment is $\WW_0 (x) = nx$.
The weight of this assignment corresponds to the term $x^n$ of the full characteristic maxpolynomial $\fullchi{W} (x) = \maxperm (x 0 \oplus W)$. 
Then the value of the parameter $x$ is gradually decreased and the constant entries of $W$ are "exposed".
This is done in a bounded number of discrete steps, where at each step a new value $b_i \leq b_{i-1}$ is computed and several updates are performed.
If during these updates some special condition is fulfilled (the emerging of a cycle after updating the spanning tree, see below) then $b_i$ is the value of the next max-plus singular value $\lambda_j$ and a new assignment is computed.
The new assignment has less parametric edges than the previous one, and the difference is the multiplicity of $\lambda_j$ (although, in fact, the singular values $\lambda_j$ need not be distinct and the actual multiplicity of $\lambda_j$ is then the sum of the corresponding multiplicities).
The weight $\WW_k (x) = a_k + (n-k)x$, $a_k \in \Rmax$, of the new assignment that applies to the current value of $x$ corresponds to a term $a_k x^{n-k}$ of $\fullchi{W}(x)$, that is, $\WW_k = a_k$ is the total weight of a $k$-assignment in $G(U,V;E)$.

The question now is how do we compute the essential $k$-assignments? 
The G-K algorithm proceeds as follows.
First we let the initial assignment $M_0 (x)$ for $b_0 \leq x < \infty$ to consist of the parametric edges $e^{x}_{ii}$, $i=1,\ldots,n$.
Then we construct the \textbf{residual graph} $\overleftarrow{G}(x)$.
The residual graph is obtained from the graph $G(x)$ by reversing the direction of the matching edges $e^{x}_{ii}$ and negating their weights.
A main tool of the algorithm consists of constructing and maintaining in the residual graph a parametric longest path tree $T$
according to the algorithm of Young, Tarjan and Orlin \cite{YTO91} (in \cite{YTO91} the constructed tree solves the analogous \textbf{parametric shortest path} problem).
This is a spanning tree with the property that the sum of weights of a path from the root to any vertex gives the longest existing path that reaches this vertex.
At the beginning the tree $T$ may consist e.g. of the edges $e^{r}_{i}$, for $i = 1,\ldots,n$, $e^{x}_{1j}$, for $j=2,\ldots,n$, and $e^{x}_{21}$, which gives a path of length $0$ to each of the vertices $u_i$ and a path of length $x$ to each of the vertices $v_j$.

We notice that the construction of a longest path tree $T$ is only possible when $\overleftarrow{G}(x)$ does not contain (reachable) cycles of positive weight: once such a cycle exists then it can be taken unlimited number of times while increasing the weight of the path indefinitely.
This property is crucial: when the value of $x$ drops below some threshold $b_i$ then the graph $G(x)$ contains a matching of $n$ pairs of vertices whose total weight is larger than the current assignment (if such a matching does not exist the algorithm terminates).
When $x < b_i$ the construction of a longest path tree is not possible anymore and this is demonstrated in the algorithm in the emergence of a cycle of positive weight when the tree is updated (see \cite{AMO93} for this property in terms of network flow). At that point we get a new max-plus singular value $\lambda_j = b_i$.

In general, as the value of $x$ decreases then $T$ needs to be updated more often than updating the assignment.
The update of $T$ is performed by trying to replace an edge $e=p \to q$ by an edge $e'=p' \to q$ (a \textbf{pivot edge}).
If the sub-tree of $T$ rooted at $q$ does not contain the vertex $p'$ then the replacement succeeds and the new tree is well-defined.
After updating the vertex keys (as explained below) the algorithm continues by looking for the next pivot edge.
Otherwise, the replacement of $e$ by $e'$ results in a cycle $C(x) = q \to q_1 \to \ldots \to q_{t-1} \to p' \to q$ of positive weight.
In this case we do the following.
The cycle $C(x)$ is alternating between edges of the current assignment $M_k (x)$ and non-matching edges.
We change the direction of the edges of $C(x)$ and negate their weights (thus turning the cycle to be of negative weight) while obtaining a new residual graph $\overleftarrow{G}(x)$.
If the current assignment is $M_k (x)$ then the new assignment is $M_{k+d} (x) = M_k (x) \triangle C(x) $, the  symmetric difference of $M_k(x)$ and $C(x)$. 
Here $k+d$ is the number of constant edges in the new assignment and $d=-w^x(C(x))>0$, where $w^x(C(x))$ denotes the integral coefficient of $x$ in the total weight of the edges of $C(x)$.
$d$ is also the multiplicity of the corresponding max-plus singular value.
Finally, we update the longest path tree: the edges of the new tree go in the direction $q \to p' \to q_{t-1} \to \ldots \to q_1$.

It remains to show how to find the pivot edge $e'$ with which we update the longest path tree. 
To reach this goal \cite{GK10} apply the algorithm of \cite{YTO91}.
For each directed edge $e$ of the residual graph $\overleftarrow{G}_k (x)$ let its weight be written as $w(e)=w^c(e)+w^x(e)x$, with $w^c(e) \in \Rmax$ is the constant part and $w^x(e) \in \{0,-1,1\}$ is the parametric part.
Then let $w(q)=w^c(q)+w^x(q)x$, where $w^c(q) \in \Rmax$ and $w^x(q) \in [-n \, .. \, n]$, be the weight of a vertex $q$ of $\overleftarrow{G}_k (x)$, defined as the sum of the weights along the path from the root to $q$ in the current tree.
For each directed edge $e = p \to q$ of the residual graph $\overleftarrow{G}$ its \textbf{key} is defined to be 
\begin{equation}
\kappa(e) = \frac{(w^c(p)+w^c(e))-w^c(q)}{w^x(q)-(w^x(p)+w^x(e))},
\label{eq:edgekey}
\end{equation}
where $\kappa(e)$ is defined to be $-\infty$ if the denominator in \eqref{eq:edgekey} is not positive.
Note that $\kappa(e)$
gives the value of $x$ at which the weight of the alternative path to $q$ through $e$ becomes equal to the weight of the path to $q$ along the current tree, and for $x < \kappa(e)$ the alternative path is the longer one.
The key of a vertex $q$ is then defined to be the maximum over the keys of all edges that terminate in $q$:
\begin{equation}
\kappa(q) = \max_p \{\kappa(e) \, | \, e=p \to q\}.
\label{eq:vertexkey}
\end{equation}
The update of the tree $T$ is done by choosing a vertex $q$ of $\overleftarrow{G}_k (x)$ of maximal key (a \textbf{pivot vertex}) as well as an edge $e=p \to q$ with $\kappa(e) = \kappa(q)$ (a pivot edge) and performing the update as described above.
Then we need to update the keys of all the vertices in the sub-tree with root $q$ before continuing.

For more details we refer to \cite{GK10}, \cite{Hoo16} and \cite{YTO91}.
\subsection{Computing the $k$-assignments corresponding to semi-essential terms}
Algorithm~\ref{alg:essential} computes the $k$-assignments which correspond to all the essential terms of the full characteristic maxpolynomial $\fullchi{W}(x)$ and some which correspond to semi-essential terms.
Since $\fullchi{W}(x)$ is in FCF we can easily deduce the values of the missing $k$-assignments.
The question is how to compute the missing $k$-assignments themselves. 
In order to compute all the $k$-assignments in Algorithm~\ref{alg:essential}, each newly computed max-plus singular value $\lambda_j$ should come with multiplicity $1$ (we remind that this is not necessarily its actual multiplicity since the computed singular values are not necessarily distinct).
The G-K algorithm does not address the case of semi-essential terms, and it can be shown that in the presence of multiple pivot edges at the same time, the choices we make influence the multiplicities of the newly computed singular values.  
It is not clear whether it is always possible to choose the pivot edges in Algorithm~\ref{alg:essential} in such a way that the singular values have multiplicity $1$, and if yes - if it can be done in an efficient way.
Nor isn't it clear whether it is possible to direct the algorithm in such a way that the max-plus singular values will occur in full multiplicity, that is, that each computed new singular value will be different from the previous one, although one can try to choose the pivot edges in a way that serves this or the other purpose.

However, we will show that after obtaining the $k$-assignments from Algorithm~\ref{alg:essential}, the missing assignments, those that correspond to semi-essential terms of $\fullchi{W}(x)$, are easy to compute.
\begin{definition}
	The terms $a_i x^i$ and $a_j x^j$ of a maxpolynomial $p(x) = \bigoplus _{k=0}^n a_k x^k$ are \textbf{adjacent} if there exists $c \in \RR$ such that $a_i c^i = a_j c^j = \hat{p}(c)$. 
\end{definition}
\begin{theorem}
	Let $W \in \Rnnmax$ and suppose we are given a $k$-assignment $M_k$ and a $(k+d)$-assignment $M_{k+d}$ which refer to adjacent terms of $\fullchi{W} (x)$.
	Then one can extract out of these assignments a set of in-between assignments $M_{k+i}$, $i=1,\ldots,d-1$.
	It follows that one can compute the assignments that refer to semi-essential terms when given those that refer to essential terms. 
	\label{th:adjacent}
\end{theorem}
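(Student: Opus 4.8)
The plan is to analyze the symmetric difference $D = M_k \triangle M_{k+d}$, exactly in the spirit of the proofs of Lemma~\ref{lem:k-assign} and Proposition~\ref{pr:concave}, and to extract from its components $d$ vertex-disjoint augmenting paths of a common gain whose successive symmetric differences with $M_k$ produce the required in-between assignments. Throughout I write $\WW_m$ for the weight of an optimal $m$-assignment, so that $\WW_m = a_m$.

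First I would record what adjacency buys us. The adjacency of the terms indexed by $k$ and $k+d$ means there is a value $c$ at which both corresponding maxmonomials equal $\widehat{\fullchi{W}}(c)$; evaluating the two monomials gives $c = (\WW_{k+d}-\WW_k)/d =: \bar{g}$, and since every other maxmonomial is dominated at $x=c$, we obtain $\WW_{k+i} \le \WW_k + i\bar{g}$ for $1 \le i \le d-1$. On the other hand, $\fullchi{W}$ is in FCF by Proposition~\ref{pr:fullchar_is_FCF}, so the sequence $(\WW_m)$ is concave and hence lies above its chords, giving $\WW_{k+i} \ge \WW_k + i\bar{g}$. Thus every intermediate weight sits exactly on the chord, $\WW_{k+i} = \WW_k + i\bar{g}$; in particular $\WW_{k+1}-\WW_k = \bar{g}$, and concavity of the increments yields $\WW_{k-1}-\WW_k \le -\bar{g}$.

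Next I would decompose $D$ into vertex-disjoint alternating cycles and alternating paths, the paths being augmenting ($+1$ to the cardinality), reverse ($-1$), or balanced ($0$), the net count of augmenting-minus-reverse paths being $d$. For a component $X$ set $g(X) = w(X \cap M_{k+d}) - w(X \cap M_k)$. Applying $X$ to $M_k$, or its complement in $D$ to $M_{k+d}$, produces genuine matchings of controlled cardinality, so optimality of the various $\WW_m$ forces term-wise bounds: $g(X)=0$ on every cycle and balanced path, $g(X) \le \bar{g}$ on every augmenting path (compare with $\WW_{k+1}$), and $g(X) \le -\bar{g}$ on every reverse path (compare with $\WW_{k-1}$; if $k=0$ no reverse path can occur). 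Summing over all components gives $\sum_X g(X) = \WW_{k+d}-\WW_k = d\bar{g}$, whereas the term-wise bounds sum to at most $(a-r)\bar{g} = d\bar{g}$, where $a$ and $r$ are the numbers of augmenting and reverse paths. Hence the inequalities are simultaneously tight, and every augmenting path has gain exactly $\bar{g}$.

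To finish, I would select any $d$ of the (at least $a = d+r$) augmenting paths, call them $p_1, \ldots, p_d$, and set $M_{k+i} := M_k \triangle (p_1 \cup \cdots \cup p_i)$ for $i = 1, \ldots, d-1$. Since these paths are vertex-disjoint and augmenting, $M_{k+i}$ is a matching of cardinality $k+i$ and weight $\WW_k + i\bar{g} = \WW_{k+i}$, hence an optimal $(k+i)$-assignment; applying the construction to each consecutive pair of essential terms delivered by Algorithm~\ref{alg:essential} fills in all the semi-essential ones. The main obstacle is the middle step: a priori $D$ may contain cycles and reverse paths that distort both cardinalities and gains, and the crux is to recognize that the chord equality (adjacency together with FCF concavity) forces all the optimality inequalities to be tight at once, pinning every augmenting path to the single common gain $\bar{g}$ and making the leftover reverse paths harmless. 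Forming and decomposing $D$ and building the chain costs $\mathcal{O}(n^2)$, matching the claimed complexity.
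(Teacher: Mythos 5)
Your proof is correct and follows essentially the same route as the paper's: decompose the difference of $M_k$ and $M_{k+d}$ into alternating components, use optimality together with the adjacency/FCF chord equality $\WW_{k+i} = \WW_k + i\bar{g}$ to force every augmenting path to have the common gain $\bar{g} = (\WW_{k+d}-\WW_k)/d$, and build the in-between assignments by successive symmetric differences. The only differences are in bookkeeping: where you bound reverse-path gains by $-\bar{g}$ via $\WW_{k-1}$ and conclude by summation tightness over all components, the paper pairs each type-(iv) path with a type-(iii) path to cancel gains, re-chooses $M_{k+d}$ so that no reverse paths remain, sorts the $d$ remaining gains and deduces equality from the single bound $g_1 \leq h_1 = G/d$ --- your explicit handling of cycles and of the chord argument is, if anything, slightly more careful than the paper's.
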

\begin{proof}
	As in the proof of Proposition~\ref{pr:concave}, we start with the assignment $M_k$ and transform it to the assignment $M_{k+d}$ in $d$ steps.
	The set of matched pairs in $M_k \cup M_{k+d}$ form disjoint paths of four possible types.
	\begin{enumerate}[label=(\roman*)]
		\item The edges $(u_i,v_j)$ that are both in $M_k$ and in $M_{k+d}$.
		\label{pairs1}
		\item Alternating paths of even lengths.
		\label{pairs2}
		\item Augmenting paths of $M_{k+d}$ with respect to $M_k$. 
		\label{pairs3}
		\item Augmenting paths of $M_k$ with respect to $M_{k+d}$.
		\label{pairs4}  
	\end{enumerate}
	For each alternating path $p$ of type \ref{pairs2} we have, by the maximality of the weights of the assignments,
	$
	w(p \cap M_k) = w(p \cap M_{k+d}).
	$
	For each pair pf paths $p_3$ of type \ref{pairs3} and  $p_4$ of type \ref{pairs4} we have by maximality, 
	$
	w((p_3 \cup p_4) \cap M_k) = w((p_3 \cup p_4) \cap M_{k+d}),
	$
	Hence, there are exactly $d$ more augmenting paths of type \ref{pairs3} than there are of type \ref{pairs4}, and if there exists a path of type \ref{pairs4} then there are two constants $c_3$ and $c_4$ such that $w(p_3 \cap M_{k+d})-w(p_3 \cap M_k)=c_3$ for every path $p_3$ of type \ref{pairs3}, and $w(p_4 \cap M_k)-w(p_4 \cap M_{k+d})=c_4$ for every path $p_4$ of type \ref{pairs4}.
	Thus, we can take alternatively for the assignment $M_{k+d}$ its original matched pairs from $d$ augmenting paths $p_1,\ldots,p_d$ of type \ref{pairs3}, and the other matches from $M_k$, so that the new assignment $M_{k+d}$ is of the same total weight as the original one.
	For each such $p_i$ of type \ref{pairs3}, let
	$$
	g_i = w(p_i \cap M_{k+d}) - w(p_i \cap M_k)
	$$
	be its induced gain.
	Suppose, without loss of generality, that 
	\begin{equation}
	g_1 \geq g_2 \geq \cdots \geq g_d.
	\label{eq:g_i}
	\end{equation}
	We construct the matchings $M_{k+i}$, $i=1,\ldots,d-1$, as
	$$
	M_{k+i} = M_{k+i-1} \triangle p_i
	$$
	and we have
	$$
	g_i = w(M_{k+i}) - w(M_{k+i-1}).
	$$
	Let
	\begin{equation}
	G = \sum_{i=1}^{d} g_i = w(M_{k+d}) - w(M_k).
	\label{eq:sum_g_i}
	\end{equation}
	Let $\WW_i$ be the total weight of a $(k+i)$-assignment, for $i=0,\ldots,d$, and let
	$$
	h_i = \WW_i - \WW_{i-1}, \quad \mbox{for } i=1,\ldots,d.
	$$
	Since $\fullchi{W} (x)$ is in FCF, $h_i \geq h_{i+1}$ for each $i$.
	The fact that the assignments $M_k$ and $M_{k+d}$ refer to adjacent terms of $\fullchi{W} (x)$ implies that the $(k+i)$-assignments, $i=1,\ldots,d-1$, refer to semi-essential terms and thus
	$$
	h_i = \frac{G}{d}, \quad \mbox{for } i=1,\ldots,d.
	$$
	By the maximality of the $(k+1)$-assignment,
	\begin{equation}
	g_1 \leq h_1 = \frac{G}{d},
	\label{eq:g_1}
	\end{equation}
	and it follows from \eqref{eq:g_i}, \eqref{eq:sum_g_i} and \eqref{eq:g_1} that
	$$
		g_1 = g_2 = \cdots = g_d = \frac{G}{d},
	$$
	and thus each matching $M_{k+i}$ is a $(k+i)$-assignment, $i=1,\ldots,d-1$.
	
	Since successive $k$-assignments that are computed by Algorithm~\ref{alg:essential} refer to adjacent terms of $\fullchi{W} (x)$ then by Theorem~\ref{th:adjacent} we can fill-in the gaps and efficiently compute all the missing $k$-assignments.
\end{proof}

\begin{algorithm}[H]
	\caption{Computing the semi-essential $k$-assignments}
	\begin{algorithmic}
		\STATE \textbf{Input:} $M_k, M_{k+d}$ \COMMENT{$k$- and $(k\!+\!d)$-assignments corresponding to adjacent terms of $\fullchi{W} (x)$} 
		\STATE \textbf{Output:} $out[i], i=1,\ldots,d\!-\!1$ \COMMENT{$(k\!+\!i)$-assignments $M_{k+i}$ that refer to semi-essential terms of $\fullchi{W}(x)$}
		\STATE \textbf{Initialization:} $out[0] \leftarrow M_k$
		\STATE Find $d-1$ disjoint augmenting paths $p_i$  of $M_{k+d}$ with respect to $M_k$ 
		\FOR{$i \leftarrow 1$ \TO  $d\!-\!1$}
			\STATE $out[i] \leftarrow out[i\!-\!1] \triangle p_{i}$
		\ENDFOR
	\end{algorithmic}
	\label{alg:semiessential}
\end{algorithm}

\subsection{Time complexity and conclusion}
A well-known algorithm that solves the assignment problem is the "Hungarian algorithm" (or "Hungarian method") of Kuhn \cite{Kuh55}, \cite{Kuh56}, based on the works of K\"{o}nig \cite{Kon16} and Egerv\'{a}ry \cite{Eger31}.
Munkers \cite{mun57} showed that the algorithm is of strongly polynomial time complexity, in fact $\mathcal{O}(n^4)$.
Later Karp \cite{EK72} and Tomizawa\cite{Tom72} gave a version of the algorithm whose running time is $\mathcal{O}(n^3)$.
The algorithm that is based on the successive short path algorithm (see \cite{AMO93}, Ch. 9.7 and Ch. 12.4) computes the sequence of $k$-assignments $k=1,\ldots,n$ in time $\mathcal{O}(n^3)$.

The algorithm of Gassner and Klinz \cite{GK10} for the parametric assignment problem is of time complexity $\mathcal{O}(n^3)$ (see also \cite{Hoo16}).
It is based on the algorithm of Young, Tarjan and Orlin \cite{YTO91} for the parametric shortest path problem.
In \cite{YTO91} the authors make use of the Fibonacci heap data structure of Fredman and Tarjan \cite{FT87} to improve the time complexity of the algorithm of Karp and Orlin \cite{KO81} for the parametric shortest path from $\mathcal{O}(n^3\log n)$ to $\mathcal{O}(n^3)$.

However, the G-K algorithm computes the assignments that refer to essential terms and possibly some of the semi-essential terms of the corresponding full characteristic maxpolynomial.
We showed here that, in fact, one can complement their algorithm and compute efficiently the missing assignments.
Given assignments $M_k$ and $M_{k+d}$, $d>1$, which refer to adjacent terms of the full characteristic maxpolynomial $\fullchi{W}(x)$, Algorithm~\ref{alg:semiessential} computes the sequence of $(k+i)$-assignments, $i=1,\ldots,d-1$, in time complexity $\mathcal{O}(dn)$: we just need to find $d-1$ augmenting paths of $M_{k+d}$ with respect to $M_k$ while assuming that each vertex $u_i$ points to its matched vertex $v_j$ and vice versa for both assignments.
Since the number of semi-essential terms of $\fullchi{W}(x)$ is bounded by $n$, the overall computation time of the semi-essential terms is $\mathcal{O}(n^2)$.

We conclude that the time complexity for computing all the $k$-assignments, $k=1,\ldots,n$, is $\mathcal{O}(n^3)$, but in special cases where there are many assignments that refer to semi-essential terms that are left to compute after completing the G-K algorithm then the fact that these extra assignments are then computed more efficiently can be a benefit. Such cases may occur when the range of the weight function is small and the pivot edges in the G-K algorithm are chosen in a way that the computed max-plus singular values are of large multiplicity. 
\\ \\ 
\noindent
\textbf{Acknowledgement.} 
\begin{small}
We thank Bettina Klinz for fruitful discussions.
This research was supported by the Austrian Science Fund (FWF) Project P29355-N35.
\end{small}

\bibliographystyle{alpha}
\bibliography{k_assignmets}

\end{document}